\newcommand{\OBSI}{\begin{remark}\begin{rm}}
\newcommand{\OBSF}{\end{rm}\end{remark}}
\newcommand{\DEFI}{\begin{definition}\begin{rm}}
\newcommand{\DEFF}{\end{rm}\end{definition}}
\newcommand{\be}{\begin{eqnarray}}
\newcommand{\en}{\end{eqnarray}}
\newcommand{\bee}{\begin{eqnarray*}}
\newcommand{\ene}{\end{eqnarray*}}
\DeclareMathOperator*{\essinf}{ess.inf}
\newtheorem{definition}{\bf Definition}[section]
\newtheorem{proposition}{\bf Proposition}[section]
\newtheorem{lemma}{\bf Lemma}[section]
\newtheorem{theorem}{\bf Theorem}[section]
\newtheorem{remark}{Remark}[section]
\title{Slow dynamics for self-adjoint semigroups and unitary evolution groups}
\author{M. Aloisio\thanks{Corresponding author.}, \, S. L. Carvalho, \, C. R. de Oliveira \, and \, G. Santana}
\date{September 2022}
\begin{document}

\maketitle

\begin{abstract} We obtain slow dynamics  for self-adjoint semigroups and unitary evolution groups. For semigroups, the slow dynamics is for orbits, and for the average  return probability in the case of unitary evolution groups. We present an application to the quantum dynamics of purely absolutely continuous systems.
\end{abstract}    


\section{Introduction}

\subsection{Contextualization}

 \noindent The existence of orbits of operator semigroups that converge to zero arbitrarily slowly has been studied by many authors in the last two decades (see \cite{ACdeOCategory,ACdeOscalesemigroup,Muller0,Muller1,Muller2,Muller} and references therein). Pioneering works were established by M\"uller in the discrete case  \cite{Muller0,Muller1,Muller2}. Namely, given any $\epsilon > 0$ and a sequence of real numbers $(a_n)_{n \geq 1}$ satisfying $|a_n| \leq 1$ for all $n \geq 1$ and $\displaystyle\lim_{n \to \infty} a_n = 0$, a known result in~\cite{Muller0} states that if $T$ is a bounded operator on a complex Banach space $X$ with spectral radius equal to~1, then there exists a normalized $\psi \in X$  such that
\[\Vert T^n\psi\Vert_{X} \geq (1-\epsilon)|a_n|,  \quad \forall n \geq 1.\]

With respect to the continuous case, M\"uller and Tomilov~\cite{Muller} have established several analogous results. E.g., let $(T(t))_{t\geq 0}$ be a weakly stable $C_0$-semigroup on a Hilbert space~$\mathcal{H}$ (i.e, it converges weakly to zero as $t \to \infty$)  such that 
\[\lim_{t \to \infty} \frac{\ln \|T(t)\|_{\mathcal{B}(\mathcal{H})}}{t} = 0.\] 
Let $g: {\mathbb{R}}_+ \longrightarrow (0,\infty)$ be a bounded function such that $\displaystyle\lim_{t \to \infty} g(t) = 0$ and let $\epsilon>0$. Then, there exists $\psi \in \mathcal{H}$  so that $\Vert \psi\Vert_{\mathcal{H}} < \displaystyle\sup_{t \geq 0} \{g(t)\} + \epsilon$ and
\begin{equation}\label{contexeq1}
\vert \langle T(t)\psi,\psi \rangle \vert > g(t), \quad \forall t \geq 0.
\end{equation}

Still with respect to the continuous case, the first three authors of this work have explored the  above result~\cite{ACdeOCategory,ACdeOscalesemigroup} to show  that the decaying rates of orbits of semigroups, which are stable but not exponentially stable, typically in Baire's sense, depend on sequences of time going to infinity. In the case of self-adjoint semigroups, it was also show~\cite{ACdeOscalesemigroup} that there is an explicit relation between the dynamics of the semigroup and  local scale spectral properties of its generator. We recall some details. 
   
Let $\mu$ be  a finite (positive) Borel measure on $\mathbb{R}$ and $B(w,\epsilon)=(w-\epsilon,w+\epsilon)$. The pointwise lower and upper local scaling exponents of $\mu$  at $w \in \mathbb{R}$ are defined, respectively, by  
\[d_\mu^-(w) := \liminf_{\epsilon \downarrow 0} \frac{\ln \mu (B(w,\epsilon))}{\ln \epsilon} \quad{\rm  and }\quad d_\mu^+(w) := \limsup_{\epsilon \downarrow 0} \frac{\ln \mu (B(w,\epsilon))}{\ln \epsilon}\,,\]
if, for all  $\epsilon>0$,  $\mu(B(w,\epsilon))> 0$; $d_\mu^{\mp}(w) := \infty$, otherwise.

\begin{proposition}[Proposition 2.2 in \cite{ACdeOscalesemigroup}]\label{decpolproposition} Let~$T$ be a negative self-adjoint  operator, i.e., $T \leq 0$, and let $\psi \in \mathcal{H}$, with $\psi \not = 0$. Then,
\begin{equation*}
d_{\mu_\psi^{T}}^+(0) = -\liminf_{t \to \infty} \frac{\ln  \|e^{tT}\psi\|_{{\mathcal{H}}}^2}{\ln t}  \quad\textrm{and}\quad   d_{\mu_\psi^{T}}^-(0) = -  \limsup_{t \to \infty} \frac{\ln \|e^{tT}\psi\|_{{\mathcal{H}}}^2}{\ln  t} \,,
\end{equation*}
where $\mu_\psi^T$ is the spectral measure  of~$T$ associated with the vector~$\psi$.
\end{proposition}

Note that Proposition~\ref{decpolproposition} indicates that the power-law decaying rates of a semigroup orbit $(e^{tT}\psi)_{t \geq 0}$ may depend on sequences of time going to infinity; i.e., if $ d_{\mu_\psi^{T}}^-(0) < d_{\mu_\psi^{T}}^+(0)$ (see \cite{ACdeOscalesemigroup} for more details). 

Stimulated by results due to M\"uller and Tomilov in~\cite{Muller}, our main goal here is to obtain orbits of self-adjoint semigroups and unitary groups (in this case, for the (time-average)  return probability) that converge slowly to zero. More precisely, by exploring local dimensional properties of self-adjoint operators, we show explicitly how it is possible to perturb initial conditions, or generators, to obtain orbits of self-adjoint semigroups that converge to zero arbitrarily slowly, at least for a sequence of time going to infinity (Theorem~\ref{maintheorem1}). We also obtain a result about slow power-law decaying rates of the  return probability (see definition ahead) of  unitary evolution groups with purely continuous spectrum (Theorem~\ref{maintheorem3}). As an application of the arguments developed here, we compute (Baire) generically the local dimensions of systems with purely continuous spectrum (Theorem \ref{localtheorem}) to show that  the time-average (quantum) return probability, of (Baire) generic states of systems with purely absolutely continuous spectrum, has an oscillating behavior between a (maximum) fast power-law decay and a (minimum) slow power-law decay (Theorem \ref{maintheorem}); we note that such phenomenon has been found by the first three authors for several systems with singular continuous   spectra \cite{Aloisio,CarvalhoCorrelation} (see also \cite{OliveiraPAMS}).

Some words about notation: $T$ will always denote a self-adjoint operator acting in a complex and separable Hilbert space~$\mathcal H$; we denote its spectrum by $\sigma(T)$ and $\mu_\psi^T$ represents the spectral measure  of~$T$ associated with the vector $\psi\in\mathcal H$; for each Borel set $\Lambda \subset \mathbb{R}$, $P^T(\Lambda)$ represents the spectral resolution of $T$ over $\Lambda$; by $\mu$ we always mean a finite nonnegative Borel measure on~$\mathbb{R}$. 

The paper is organized as follows. The main results are described in Subsection~\ref{secslow}, which are stated in Theorems~\ref{maintheorem1}, \ref{maintheorem3} and~\ref{maintheorem}, along with some examples and dynamical consequences. In Section~\ref{proofs}, we present the proofs of Theorems~\ref{maintheorem1} and~\ref{maintheorem3}. In Section~\ref{localsec}, we study the local spectral properties of self-adjoint operators and conclude with a proof of Theorem~\ref{maintheorem}. 

\subsection{Statement of main results}\label{secslow}

\noindent Let $T$ be a pure point negative self-adjoint operator and  $(\eta_n)_{n\geq 1}$  the normalized eigenvectors of~$T$, say $T\eta_n = \lambda_n \eta_n$, so that $(\lambda_n)_{n \geq 1} \subset (-\infty,0)$ are the corresponding eigenvalues witch satisfy $\displaystyle\limsup_{n \to \infty} \lambda_n = 0.$
For  $\psi = \displaystyle\sum_{j=1}^N b_j \eta_j\in\mathcal{H}$, one has
\[\|e^{tT}\psi\|_{\mathcal{H}} =  \biggr\|\sum_{j=1}^N b_j e^{t\lambda_j} \eta_j \biggr\|_{\mathcal{H}} \leq N \max_{1\leq j \leq N} |b_j|e^{\lambda t},\]
with $\lambda = \displaystyle\max_{1\leq j \leq N} \lambda_j <0$, that is, for these initial conditions, the orbits vanish exponentially. Due to the abstract results by M\"uller and Tomilov~\cite{Muller} (see also \eqref{contexeq1}), given $\beta: \mathbb{R}\rightarrow (0, \infty)$ with
\begin{equation*}
    \lim_{t\rightarrow \infty}\beta(t)=\infty, 
\end{equation*} 
there exists $\psi \in \mathcal{H}$ such that 
\begin{equation*}
\limsup_{t\rightarrow\infty}\beta(t)\|e^{tT}\psi\|_{\mathcal{H}}=\infty,
\end{equation*}
 since $0 \in \sigma(T)$ in this case. In this specific context, in Theorem~\ref{maintheorem1}~i), we refine this result in the following sense: we show how it is possible to perturb any initial condition to explicitly display (in terms of the spectral structure of the generator) a new initial condition whose orbit  vanishes slower than any prescribed speed, at least for a sequence of time going to infinity, and, in item~ii), we  obtain a version of such result in terms of perturbations of the infinitesimal generator. 

%

\begin{theorem}\label{maintheorem1} Let $\beta: \mathbb{R}\rightarrow (0, \infty)$  be  a strictly increasing onto function, so
\begin{equation*}
    \lim_{t\rightarrow \infty}\beta(t)=\infty\,.
\end{equation*} 
\begin{enumerate}
\item[{\rm i)}] If $T$ is a  pure point negative self-adjoint operator as above, given $\psi\in\mathcal H$, there exists a sequence $(\psi_k)\subset\mathcal H$ that converges to~$\psi$ such that, for all~$k$,
\[
\limsup_{t\rightarrow\infty}\beta(t)\|e^{tT}\psi_k\|_{\mathcal{H}}=\infty\,.
\] 
\item[{ \rm ii)}]If $T$ is a negative bounded self-adjoint operator, then, for every nonzero $\psi \in \mathcal{H}$, there exists a sequence $(T_k)$ of negative bounded pure point self-adjoint operators that strongly converges to $T$ such that, for all $k$,
\[\limsup_{t\rightarrow \infty}\beta(t)\|e^{tT_k}\psi\|_{\mathcal{H}}=\infty\,.
\]
\end{enumerate}
\end{theorem}

\begin{remark}
\end{remark}
\rm{
\begin{enumerate}
\item [i)] Let us describe the vectors $\psi_k$ in the statement of Theorem \ref{maintheorem1}~i). Write  $\psi=\sum_{l=1}^{\infty}b_l\eta_l$ and, for each subsequence $(\lambda_{j_{l}})$ of eigenvalues of $T$ with $\lambda_{j_l}\uparrow 0$ and $\sum_{l=1}^{\infty} \frac{1}{\beta(1/|\lambda_{j_l}|)}< \infty$, one may pick
\begin{equation*}
\psi_k=\sum_{l=1}^{k}b_l\eta_l+\sum_{l=k+1}^{\infty} \frac{1}{\sqrt{\beta(1/|\lambda_{j_l}|)}}\, \eta_{j_{l}}\,.
\end{equation*}

\item[ii)] For every $\psi \in \mathcal{H}$, by the Spectral Theorem and dominated convergence,  
\[\lim_{t \to \infty} \|e^{tT}\psi\|_{\mathcal{H}}^2 = \mu_\psi^T(\{0\}) + \lim_{t \to \infty} \int_{\mathbb{R}_{-}\setminus \{0\}} e^{2tx} {\mathrm d}\mu_\psi^T(x) = \mu_\psi^T(\{0\}) = \|P^T(\{0\})\psi\|_{\mathcal{H}}^2\,.
\]
 Therefore, $e^{tT}$ is stable (i.e, all the orbits vanish as $t \rightarrow \infty$) if and only if zero is not an eigenvalue of~$T$. Hence,  Theorem~\ref{maintheorem1}~i) is particularly interesting in this case. Note that a well-known example of injective operator that satisfies the hypotheses of this theorem is the Hydrogen atom model restricted to its point subspace; see Chapter~11 in~\cite{Oliveira} for details.

\item[iii)] If $0 \not\in \sigma(T)$, then there exists $\gamma>0$ such that for each $\psi \in \mathcal{H}$ and each $t>0$, by the Spectral Theorem,  
\[\|e^{tT}\psi\|_{\mathcal{H}}^2 =  \int_{-\infty}^{-\gamma} e^{2tx} {\mathrm d}\mu_\psi^T(x) \leq e^{-2\gamma t} \|\psi\|_{\mathcal{H}}^2,\]
that is, all  orbits vanish exponentially as $t \rightarrow \infty$.

\item[iv)] Given any  nonzero initial condition $\psi \in \mathcal{H}$,  Theorem \ref{maintheorem1} ii)  says that we may always (strongly)  perturb the negative bounded self-adjoint infinitesimal generator~$T$ so that the orbit of~$\psi$  vanishes  slower than a prescribed speed~$\beta(t)$, at least for a sequence of time going to infinity.
\end{enumerate}}

Now we pass to unitary evolution groups. Given a self-adjoint operator~$T$  in $\mathcal{H}$, recall that  ${\mathbb{R}} \ni t \mapsto e^{-itT}$ is a one-parameter strongly continuous unitary evolution group and, for each $\psi\in \mathcal{H}$, $(e^{-itT}\psi)_{t \in \mathbb{R}}$ is the unique solution to the Schr\"odinger equation
\[
\begin{cases} \partial_t \psi = -iT\psi, \quad t \in {\mathbb{R}}, \\ \psi(0) = \psi\in {\rm dom} \, T.  \end{cases}
\]
A standard dynamical quantity that  probes the large time behavior of $e^{-itT}\psi$, and important in quantum mechanics, is the  so-called (time-average) {\em  return probability}, that is, 
\begin{equation*}
W_\psi^T(t) :=\frac{1}{t}\int_0^t |\langle e^{-isT} \psi,  \psi \rangle|^2 \, {\mathrm d}s.         
\end{equation*}
   
By the Spectral Theorem and  Wiener's Lemma~\cite{Oliveira}, 
 \begin{equation*}
\lim_{t \to \infty} W_\psi^T(t) = \sum_{\lambda \in \mathbb{R}} |\mu_\psi^T(\{\lambda\})|^2;         
\end{equation*}
in particular, if $T$ has purely continuous spectrum, then
 \begin{equation*}
\lim_{t \to \infty} W_\psi^T(t) =0.         
\end{equation*}
Our next result, Theorem~\ref{maintheorem3}, ensures   the existence of orbits, under each spectrally continuous unitary evolution group, with arbitrarily slow power-law  convergence rates. 

\begin{theorem}\label{maintheorem3} Let $T$ be a self-adjoint operator with purely continuous spectrum.
Then, there exists a vector $\psi \in \mathcal{H}$ such that, for every $\epsilon>0$,  
\[
\limsup_{t\rightarrow \infty}  \, t^\epsilon \, W_\psi^T(t)=\infty.
\]
\end{theorem}

\begin{remark}{\rm Although the existence of orbits  of operator semigroups that slowly decay  is a subject extensively studied in the literature, to the best knowledge of the present authors,  Theorem~\ref{maintheorem3}  is the first general result on slow dynamics for (spectrally continuous) unitary evolution groups.}
\end{remark}

\subsubsection{Absolutely continuous spectrum and generic quantum dynamics}
    
\noindent As mentioned before, we present an application to quantum dynamics of systems with purely absolutely continuous spectrum.     
    
\begin{theorem}\label{maintheorem} Let~$T$ be a bounded self-adjoint operator with purely absolutely continuous spectrum. Then,  the set of  $\psi \in \mathcal{H}$ such that, for all $k \in \mathbb{N}$,   
 \[\liminf_{t \to \infty}  t^{1-1/k}W_\psi^T(t) = 0 \quad and \quad  \limsup_{t \to \infty} t^{1/k}W_\psi^T(t)=\infty\] 
is generic in $\mathcal{H}$, i.e., it contains a dense $G_\delta$ subset of $\mathcal{H}$.
\end{theorem}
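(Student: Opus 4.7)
The plan is a standard Baire category argument: I will express the ``bad'' set (where (i) or (ii) fails) as a countable union of closed sets with empty interior, so its complement is a dense $G_\delta$. For condition (i), for each $k, n \in \mathbb{N}$ set
\[
A_{k,n} := \{\psi \in \mathcal{H} : \mu_\psi^T(I) \leq n|I|^{1/k} \text{ for every interval } I \subset \mathbb{R} \text{ with } |I| < 1\},
\]
so that $\{\psi : \mu_\psi^T \text{ is U}(1/k)\text{H}\} = \bigcup_{n} A_{k,n}$. Continuity of the map $\psi \mapsto \mu_\psi^T(I) = \|\chi_I(T)\psi\|^2$ for each fixed $I$ makes each $A_{k,n}$ closed. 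To show its interior is empty, I fix $\psi_0 \in A_{k,n}$ and $r > 0$, pick any $\lambda \in \sigma(T)$, and consider the intervals $I_\delta := (\lambda - \delta/2, \lambda + \delta/2)$ for $\delta > 0$. Since $\lambda \in \sigma(T)$, $\chi_{I_\delta}(T) \neq 0$; and because purely $\alpha$-Hausdorff continuous spectrum contains no eigenvalues, the range of $\chi_{I_\delta}(T)$ is infinite-dimensional (otherwise the restriction of $T$ to it would produce eigenvalues of $T$). Pick a unit vector $\phi$ in this range and set $\psi := \psi_0 + (r/2)\phi$. Since $\psi_0 \in A_{k,n}$, $\|\chi_{I_\delta}(T)\psi_0\| \leq \sqrt{n}\,\delta^{1/(2k)}$, and the reverse triangle inequality gives
\[
\sqrt{\mu_\psi^T(I_\delta)} \;=\; \|\chi_{I_\delta}(T)\psi_0 + (r/2)\phi\| \;\geq\; r/2 - \sqrt{n}\,\delta^{1/(2k)};
\]
choosing $\delta$ small enough (and $<1$) makes $\mu_\psi^T(I_\delta) > n\delta^{1/k}$, so $\psi$ lies within distance $r$ of $\psi_0$ but is not in $A_{k,n}$.

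The $\limsup$ half of (ii) is automatic once (i) holds. If $\mu_\psi^T$ is not U$(1/(2k))$H, the contrapositive of Theorem~\ref{teolast1}(ii) states that for every $C > 0$ the bound $W_\psi^T(t) < C t^{-2/k}$ must fail at some $t$; since $W_\psi^T$ is bounded on compact time intervals, the failure must occur at arbitrarily large $t$, giving $\limsup_{t\to\infty} t^{2/k} W_\psi^T(t) = \infty$ and \emph{a fortiori} $\limsup t^{1/k} W_\psi^T(t) = \infty$.

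For the $\liminf$ half, for each $k, N, m \in \mathbb{N}$ introduce
\[
B_{k,N,m} := \{\psi \in \mathcal{H} : t^{\alpha - 1/k} W_\psi^T(t) \geq 1/m \text{ for every } t \geq N\},
\]
so that $\{\psi : \liminf_{t \to \infty} t^{\alpha - 1/k} W_\psi^T(t) > 0\} = \bigcup_{N, m} B_{k, N, m}$. Continuity of $\psi \mapsto W_\psi^T(t)$ for each fixed $t$ makes $B_{k,N,m}$ closed. For empty interior, invoke Theorem~\ref{teoLast}: since $T$ has purely $\alpha$-Hausdorff continuous spectrum, $\mathcal{H}^T_{\alpha \mathrm{c}} = \mathcal{H}$ and hence $\mathcal{H}^T_{\mathrm{uH}}(\alpha)$ is norm-dense in $\mathcal{H}$. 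Given $\psi_0$ and $r > 0$, pick $\phi \in \mathcal{H}^T_{\mathrm{uH}}(\alpha)$ with $\|\phi - \psi_0\| < r$; Theorem~\ref{teolast1}(i) yields $W_\phi^T(t) \leq C_\phi t^{-\alpha}$, so $t^{\alpha - 1/k} W_\phi^T(t) \leq C_\phi t^{-1/k} \to 0$, and the defining inequality of $B_{k,N,m}$ fails for some $t \geq N$, proving $\phi \notin B_{k,N,m}$.

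Combining, the set $\mathcal{H} \setminus \bigl( \bigcup_{k,n} A_{k,n} \cup \bigcup_{k,N,m} B_{k,N,m} \bigr)$ is a countable intersection of open dense sets, hence a dense $G_\delta$, and (i) together with both limit conditions in (ii) hold on it. The principal obstacle is the empty-interior argument for $A_{k,n}$: one must perturb any $\psi_0 \in A_{k,n}$ into a vector carrying substantial mass in an arbitrarily thin spectral window. The crucial input is that purely continuous spectrum forces $\chi_I(T)$, whenever nonzero, to have infinite-dimensional range, providing unit-norm perturbation directions supported in the desired window.
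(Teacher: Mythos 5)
Your argument is correct in structure and, for part (i), takes a genuinely different and more elementary route than the paper. The paper obtains (i) and the $\limsup$ statement from Theorem~\ref{localtheorem}: the genericity, for each fixed $x\in\sigma(T)$, of the set ${\mathcal G}(x)$ of vectors with extremal pointwise scaling exponents, which in turn rests on Lemma~\ref{normallema} (quoted from \cite{Aloisioscalesemigroup}) and the identities (\ref{exploc1})--(\ref{exploc2}); the observation that U$\alpha$H forces $d^{\mp}_{\mu_\psi^T}(x)\geq\alpha$ then yields (i). You bypass all of this: you write the set of U$(1/k)$H vectors as $\bigcup_n A_{k,n}$, note each $A_{k,n}$ is closed, and kill its interior by adding a multiple of a unit vector in ${\rm rng}\,\chi_{I_\delta}(T)$ for a thin window $I_\delta$ around a spectral point --- which needs only $\lambda\in\sigma(T)$, so your aside about absence of eigenvalues and infinite-dimensionality of the range is unnecessary (and for $\alpha=0$ the no-eigenvalue claim would not even be available, since every measure is $0$-Hausdorff continuous). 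This is self-contained and arguably simpler; what the paper's route buys is the stronger local statement $d^-_{\mu_\psi^T}(x)=0$ and $d^+_{\mu_\psi^T}(x)=\infty$ of Theorem~\ref{localtheorem}, of independent interest but not needed for Theorem~\ref{maintheorem}. Your treatment of the $\liminf$ half (density of ${\mathcal H}^T_{\mathrm{uH}}(\alpha)$ via Theorem~\ref{teoLast} and the hypothesis, plus Theorem~\ref{teolast1}(i), with the closed sets $B_{k,N,m}$ doing the $G_\delta$ bookkeeping) is essentially identical to the paper's argument for its sets $A_k$.

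One step needs repair: in the $\limsup$ paragraph your exponents are mismatched. From ``$\mu_\psi^T$ is not U$(1/(2k))$H'', the contrapositive of Theorem~\ref{teolast1}(ii) (applied with exponent $1/k$) gives that no constant $C$ satisfies $W_\psi^T(t)<Ct^{-1/k}$ for all $t>0$; it does not give the failure of $W_\psi^T(t)<Ct^{-2/k}$ as you state. Moreover the closing inference ``$\limsup_{t\to\infty} t^{2/k}W_\psi^T(t)=\infty$, \emph{a fortiori} $\limsup_{t\to\infty} t^{1/k}W_\psi^T(t)=\infty$'' runs in the wrong direction: since $t^{2/k}W_\psi^T(t)\geq t^{1/k}W_\psi^T(t)$ for $t\geq 1$, the first statement is the weaker one (for instance $W(t)=t^{-3/(2k)}$ satisfies the former and not the latter). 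The fix is immediate and is exactly the paper's argument: on your dense $G_\delta$ set, (i) holds for every index, in particular $\mu_\psi^T$ is not U$(1/(2k))$H, so Theorem~\ref{teolast1}(ii) gives $\sup_{t>0}t^{1/k}W_\psi^T(t)=\infty$; since $W_\psi^T(t)\leq\|\psi\|^4$ keeps $t^{1/k}W_\psi^T(t)$ bounded on compact time intervals, this forces $\limsup_{t\to\infty}t^{1/k}W_\psi^T(t)=\infty$, as required.
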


{\bf Application to Almost Mathieu Operator.} Recall that  the Almost Mathieu Operator $H^{\lambda,\alpha}_{\omega} $ is defined on~$\ell^2(\mathbb{Z})$ and given by 
\begin{equation}\label{lp11}
(H^{\lambda,\alpha}_{\omega} u)_n = u_{n+1} + u_{n-1} + 2\lambda \cos(2\pi(\omega+n\alpha)) u_n,
\end{equation}
with $\alpha$ and $\omega$ in $\mathbb{T} = \mathbb{R}\setminus\mathbb{Z}$, $\alpha$ irrational.
For each $w \in \mathbb{T}$, $\alpha \in \mathbb{T}$ irrational and $0<\lambda_1<\lambda_2<1$, let $X^{\lambda_{1,2},\alpha}_{\omega}$ be the set of operators $H^{\lambda,\alpha}_{\omega}$ with $\lambda \in [\lambda_1,\lambda_2]$, endowed with the metric
\[d(H^{\lambda}_{\omega,\alpha},H^{\lambda'}_{\omega,\alpha}) = |\lambda - \lambda'|.\]
It is well known that every $ H^{\lambda}_{\omega,\alpha} \in X^{\lambda_{1,2},\alpha}_{\omega}$ has purely absolutely continuous spectrum. For more details, see  \cite{DamanikDDP}. 
  
\begin{theorem}\label{theoremp} There exists a generic set $\mathcal{M}\subset\ell^2(\mathbb{Z})$ so that, for each $\psi \in \mathcal{M}$, the set of operators $H=H^{\lambda}_{\omega,\alpha} \in X^{\lambda_{1,2},\alpha}_{\omega}$ such that, for each $k \geq 1$,
\[\liminf_{t \to \infty}  t^{1-1/k}\,W_\psi^H(t)= 0 \quad {and} \quad  \limsup_{t \to \infty}  t^{1/k}\, W_\psi^H(t) = \infty\]
is generic in~$X^{\lambda_{1,2},\alpha}_{\omega}$.
\end{theorem}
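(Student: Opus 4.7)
The plan is to recast the statement as a Kuratowski--Ulam-type interchange of quantifiers on the Polish product $\ell^{2}(\mathbb{Z}) \times X_{w}$, combining Avila's genericity result from \cite{Avilalimitperiodic} (purely absolutely continuous spectrum is generic in $X_w$) with Theorem~\ref{maintheorem} applied with $\alpha = 1$.

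First, for each $k \geq 1$ I would introduce
\[
R_k := \Bigl\{(\psi, H) \in \ell^{2}(\mathbb{Z}) \times X_w : \liminf_{t \to \infty} t^{\,1 - 1/k} W_\psi^H(t) = 0 \ \text{and}\ \limsup_{t \to \infty} t^{\,1/k} W_\psi^H(t) = \infty \Bigr\},
\]
and verify that $R_k$ is $G_\delta$ in the product. The key point is that, for each fixed $t>0$, the map $(\psi,H)\mapsto W_\psi^H(t)$ is jointly continuous: convergence in $X_w$ is uniform convergence of the sampling functions, which entails operator-norm convergence of $H^w_{g,\tau}$ and hence convergence of $\langle \psi, e^{-isH}\psi\rangle$ uniformly in $s\in[0,t]$. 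Writing the two asymptotic clauses as countable intersections of the open sets $\{(\psi,H) : t^{\,1-1/k} W_\psi^H(t) < 1/m\}$ and $\{(\psi,H) : t^{\,1/k} W_\psi^H(t) > m\}$ over rationals $t>n$ (and using continuity of $W_\psi^H(t)$ in $t$ to restrict to a countable cofinal set) yields the $G_\delta$ description.

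Next, I would prove that $R := \bigcap_{k \geq 1} R_k$ is dense in $\ell^{2}(\mathbb{Z}) \times X_w$. Given a basic open set $U \times V$, Avila's theorem produces $H_1 \in V$ whose spectrum is purely absolutely continuous, hence purely $1$-Hausdorff continuous (since on $\mathbb{R}$ the Hausdorff $1$-measure agrees with Lebesgue measure). Theorem~\ref{maintheorem} with $\alpha = 1$ and $T = H_1$ then supplies a dense $G_\delta$ set of $\psi \in \ell^{2}(\mathbb{Z})$ satisfying the asymptotic conditions simultaneously for every $k$; picking $\psi_1$ in its intersection with $U$ gives $(\psi_1, H_1) \in R$. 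Density plus the $G_\delta$ structure imply that $R$ is comeager in the product.

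The conclusion then follows from the Kuratowski--Ulam theorem: there exists a comeager (hence generic) set $\mathcal{M} \subset \ell^{2}(\mathbb{Z})$ such that, for every $\psi \in \mathcal{M}$, the fiber $R_\psi = \{H \in X_w : (\psi, H) \in R\}$ is comeager in $X_w$, which is the statement of the theorem. The main technical obstacles I expect are (a) carefully checking the joint continuity of $(\psi, H) \mapsto W_\psi^H(t)$ through the compatibility of the $X_w$-topology with the norm-continuous functional calculus, and (b) invoking Avila's result in the exact form needed for the specific space $X_w$ defined in the paper rather than for an abstract space of limit-periodic potentials.
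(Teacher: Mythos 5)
Your argument is essentially correct, but it follows a genuinely different route from the paper. You work directly with the return probabilities on the product space $\ell^2(\mathbb{Z})\times X_w$: joint continuity of $(\psi,H)\mapsto W_\psi^H(t)$ (via $\|H_{g}-H_{g'}\|\le\|g-g'\|_\infty$ and the resulting norm continuity of $e^{-isH}$ uniformly on $s\in[0,t]$) makes each $R_k$ a $G_\delta$ in the product; density of $R=\bigcap_k R_k$ follows from density in $X_w$ of operators with purely absolutely continuous spectrum together with Theorem~\ref{maintheorem} for $\alpha=1$ (purely absolutely continuous $=$ purely $1$-Hausdorff continuous); and Kuratowski--Ulam (legitimate here, since $\ell^2(\mathbb{Z})$ and $X_w\cong C(\Omega,\mathbb{R})$ are Polish) converts comeagerness of $R$ into the statement about fibers. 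The paper avoids Kuratowski--Ulam altogether: it fixes a countable dense family $(T_j)$ of purely absolutely continuous operators in $X_w$, defines $\mathcal{M}$ explicitly as $\bigcap_j\{\psi \mid D^{+}_{\mu^{j}_{\psi}}(2)=1,\ D^{-}_{\mu^{j}_{\psi}}(2)=0\}$, generic by Corollary~\ref{maincorollary}, and then, for each fixed $\psi\in\mathcal{M}$, invokes Proposition~2.4 of \cite{AloisioWonderland} to see that $X_{01}(\psi)$ is a $G_\delta$ in $X_w$ containing the dense set $\{T_j\}$, finishing with the identities (\ref{exp1})--(\ref{exp2}). Your route is more self-contained on the $G_\delta$ side (elementary continuity of $W$ replaces the external $G_\delta$ result for correlation dimensions) and works with the dynamical quantities directly, at the price of invoking Kuratowski--Ulam and producing $\mathcal{M}$ non-constructively; the paper's route gives $\mathcal{M}$ explicitly and stays within the correlation-dimension framework.

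One correction is needed: your opening claim that \cite{Avilalimitperiodic} yields that purely absolutely continuous spectrum is \emph{generic} in $X_w$ is false --- generically a limit-periodic operator has Cantor spectrum of zero Lebesgue measure, hence no absolutely continuous component at all. What your density step actually uses (and all it needs) is that operators with purely absolutely continuous spectrum are \emph{dense} in $X_w$; this is Theorem~1.1 of \cite{DamanikAC}, the reference the paper itself invokes. With that citation fixed, your argument goes through.
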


\begin{remark}{\rm It is worth underlying that such phenomenon has been shown, for some singular continuous systems, by the first three authors  in~\cite{Aloisio,CarvalhoCorrelation} (see also \cite{OliveiraPAMS}) through the density of pure point operators in appropriate spaces, a quite different setting from this work. In this case of purely absolutely continuous spectrum this phenomenon is, in some sense, the counterpart of the  situation of an operator with pure point spectrum and quasiballistic transport \cite{ACdeOLatters,delRio}.}
\end{remark}


\begin{proof}[{Proof} {\rm (Theorem~\ref{theoremp})}]   The result is a direct consequence of Theorem~\ref{maintheorem} and an argument involving separability by some of the present authors in \cite{Aloisio,ACdeOLatters}. Since $X^{\lambda_{1,2},\alpha}_{\omega}$ is a separable space, let $(H_j)_j$ be a dense sequence in  $X^{\lambda_{1,2},\alpha}_{\omega}$. If $\mu^{j}_{\psi}$ denotes the spectral measure of the pair $(H_j,\psi)$, it follows from Theorem \ref{maintheorem} that   
\[
\mathcal{M}=\displaystyle\bigcap_{j} \big\{\psi \in \ell^2(\mathbb{Z}) \mid  \liminf_{t \to \infty}  t^{1-1/k} \, W_\psi^{H_j}(t)= 0 \quad {and} \quad  \limsup_{t \to \infty}  t^{1/k}  \, W_\psi^{H_j}(t) = \infty \big\}
\]
is generic in $\ell^2(\mathbb{Z})$. Since, for every $\psi \in \mathcal{M}$,
\[
\big\{H \mid  \liminf_{t \to \infty}  t^{1-1/k} \, W_\psi^{H}(t)= 0 \quad {and} \quad  \limsup_{t \to \infty}  t^{1/k}  \, W_\psi^{H}(t) = \infty\} \supset \{H_j\}
\]
is a  $G_\delta$ set in~$X^{\lambda_{1,2},\alpha}_{\omega}$ (see the proof of Proposition~2.2 in \cite{Aloisio}),  the result follows.
\end{proof}


\section{Proofs of Theorems \ref{maintheorem1} and \ref{maintheorem3}}\label{proofs}

\subsection{Proof of Theorem \ref{maintheorem1}}

\noindent The main ingredient in the proof of this theorem is the well-known expression of the spectral resolution of a pure point self-adjoint operator. Namely, since $T$ is a pure point operator, each $\psi \in \mathcal{H}$ can be written as
$\sum_{j=1}^\infty b_j \eta_j$ for some square-summable sequence $(b_j)_j$ of complex numbers, and the corresponding spectral measure is $\mu_\psi^T (\Lambda) = \sum_{\lambda_j \in \Lambda} |b_j|^2 \delta_{\lambda_j}$.

i) Let $(\lambda_{j_l})_l$ be a subsequence of eigenvalues of $T$, corresponding to orthonormal
eigenvectors $(\eta_{j_l})_l$, so that $\lambda_{j_l}\uparrow 0$ and $\displaystyle{\sum_{l= 1}^\infty \frac{1}{\beta(1/|\lambda_{j_l}|)}<\infty}$. Given $\psi \in \mathcal{H}$, write $\displaystyle{\psi= \sum_{j= 1}^\infty b_j\eta_j}$ and set 
\begin{equation*}
    \psi_k:=\sum_{l=1}^{k}b_l\eta_l+\sum_{l=k+1}^{\infty} \frac{1}{\sqrt{\beta(1/|\lambda_{j_l}|)}}\,\eta_{j_{l}}.
\end{equation*}
It follow that, for all $l\geq k+1$,
\begin{equation*}
    \mu^{T}_{\psi_k}([\lambda_{j_{l}},0])\geq \mu^{T}_{\psi_k}(\{\lambda_{j_{l}}\}) =  \frac{1}{\beta(1/|\lambda_{j_l}|)},
\end{equation*}
and therefore 
\begin{eqnarray*}
    \beta(1/|\lambda_{j_l}|)\,\|e^{(1/|\lambda_{j_l}|) T}\psi_{k}\|_{\mathcal{H}}&=&  \beta(1/|\lambda_{j_l}|)\left(~\int_{-\infty}^0 e^{2 (1/|\lambda_{j_l}|)x} {\mathrm d}\mu^{T}_{\psi_k}(x)\right)^{1/2}\\
     &\geq&  \beta(1/|\lambda_{j_l}|)\left(~\int_{\lambda_{j_l}}^0 e^{2 (1/|\lambda_{j_l}|)x} {\mathrm d}\mu^{T}_{\psi_k}(x)\right)^{1/2}\\
    &\geq&  e^{-1} \beta(1/|\lambda_{j_l}|) \,  (\mu^{T}_{\psi_k}([\lambda_{j_l},0]))^{1/2}\\
    &\geq&  e^{-1}  \sqrt{\beta(1/|\lambda_{j_l}|)}\,,
\end{eqnarray*}
which implies
\begin{equation*}
    \limsup_{t\rightarrow \infty}\beta(t)\|e^{tT}\psi_{k}\|_{\mathcal{H}}=\infty\,.
\end{equation*}

\ 

ii) Let $\psi \in \mathcal{H}$ and  $\{e_j\}_{j \geq 1}$ be an orthonormal basis of $\mathcal{H}$ such that $\psi=\displaystyle{\sum_{j=1}^{\infty}a_je_j}$ with $a_j \neq 0$ for infinitely many $j$s. Let $(a_{j_l})_{l \geq 1}$ be a subsequence of $(a_j)_{j \geq 1}$ with $|a_{j_l}|\downarrow 0$. Consider a positive sequence $t_l\rightarrow \infty$ so that $\beta(t_l)=|a_{j_l}|^{-2}$. 

For  each $k \geq 1$, set
\begin{equation*}
    T_k:= TP_{l\leq  k} - \sum_{l=k+1}^{\infty}\frac{1}{t_l}\langle e_{j_l},\cdot\rangle e_{j_l},
\end{equation*}
where $P_{l\leq k}$ is the projection onto the subspace generated by $\{e_{l}\}_{l\leq k}$. It is clear that $T_k \to T$ as $k \to \infty$ in the strong sense. The operator $TP_{j\leq  k}$ is  pure point  and negative. Note that for large enough $l$, $T_k (e_{j_l}) = - \frac{1}{t_l}e_{j_l}$.

Fix $k$; for large enough $l$, one has 
\begin{equation*}
    \mu^{T_k}_{\psi}([-1/t_l,0])\geq \mu^{T_k}_{\psi}(\{-1/t_l\}) = |a_{j_l}|^2 = \frac{1}{\beta(t_l)},
\end{equation*}
and therefore 
\begin{eqnarray*}
    \beta(t_l)\|e^{t_lT_k}\psi\|_{\mathcal{H}}&=& \beta(t_l)\left(~\int_{-\infty}^0 e^{2t_lx} {\mathrm d}\mu^{T_k}_{\psi}(x)\right)^{1/2}\\
    &\geq& \beta(t_l)\left(~\int_{-1/t_l}^0 e^{2t_lx} {\mathrm d}\mu^{T_k}_{\psi}(x)\right)^{1/2}\\
    &\geq& e^{-1}\beta(t_l)(\mu^{T_k}_{\psi}([-1/t_l,0]))^{1/2}\\
    &\geq& e^{-1}\sqrt{\beta(t_l)},
\end{eqnarray*}
which results in
\begin{equation*}
    \limsup_{t\rightarrow \infty}\beta(t)\|e^{tT_k}\psi\|_{\mathcal{H}}=\infty\,.
\end{equation*}
 
\subsection{Proof of Theorem \ref{maintheorem3}}

\noindent Let $\alpha \in [0,1]$. Recall that a finite positive Borel measure~$\mu$ on~$\mathbb{R}$ is uniformly $\alpha$-H\"older continuous (denoted {\rm U}$\alpha${\rm H}) if there exists a constant $C>0$ such that for each interval~$I$ with $\ell(I) < 1$, $\mu(I) \le C\, \ell(I)^\alpha$; here, $\ell(\cdot)$ denotes the Lebesgue measure on~$\mathbb{R}$. Theorem \ref{Strichartztheorem}~i) is, indeed, a particular case of a well-known theorem by Strichartz~\cite{strichartz1990}.

\begin{theorem}[Theorems 2.5 and 3.1  in \cite{Last}]\label{Strichartztheorem} Let $\mu$ be a finite Borel measure on $\mathbb{R}$ and $\alpha \in [0,1]$.

\begin{enumerate}

\item[\rm{i)}] If $\mu$ is {\rm U}$\alpha${\rm H}, then there exists  $C_\mu> 0$, depending only on $\mu$, such that for every $f \in {\mathrm L}^2(\mathbb{R}, d\mu )$ and every $t>0$, 
\[\frac{1}{t} \int_0^t \bigg|\int_{\mathbb{R}} e^{- isx} f(x)\,  {\mathrm d}\mu(x) \bigg|^2  {\mathrm d}s < C_{\mu} \|f\|_{{\mathrm L}^2(\mathbb{R}, d\mu )}^2 \, t^{-\alpha}. \] 

\item[\rm{ii)}] If there exists $C_{\mu}>0$ such that for every $t>0$, 
\[\frac{1}{t} \int_0^t \bigg|\int_{\mathbb{R}} e^{- isx}\,  {\mathrm d}\mu(x) \bigg|^2  {\mathrm d}s < C_{\mu} \, t^{-\alpha},\] 
then $\mu$ is {\rm U}$\frac{\alpha}{2}${\rm H}.

\end{enumerate}
\end{theorem}

\begin{lemma}[Lemma  2.1 in \cite{ACdeOscalesemigroup}]\label{normallema}Let~$T$ be a negative self-adjoint operator with  $0 \in \sigma(T)$ and let  $\alpha:{\mathbb{R}}_+ \longrightarrow (0,\infty)$ be such that
\[\lim_{t \to \infty} \alpha(t) = \infty.\]
Then, there exist $\eta \in \mathcal{H}$ and a sequence $t_j \rightarrow \infty$ such that, for sufficiently large~$j$,  
\[\mu_{\eta}^T\big(B(0;{1/t_j})\big) \geq \frac{1}{\alpha(t_j)}.\]
\end{lemma}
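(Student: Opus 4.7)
The plan is to build $\eta$ as a norm-convergent series $\sum_k c_k \phi_k$ in which the $\phi_k$ are mutually orthogonal unit vectors whose spectral measures live on successively smaller pairwise-disjoint intervals accumulating at~$0$, with the coefficients $c_k$ tuned to the growth of~$\alpha$.

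First I would dispose of the easy case $P^T(\{0\}) \neq 0$: any nonzero $\eta \in P^T(\{0\})\mathcal H$ gives $\mu_\eta^T(B(0;1/t)) = \|\eta\|^2$ for every $t>0$, so any sequence $t_j \to \infty$ works as soon as $\alpha(t_j) \geq 1/\|\eta\|^2$. Hence assume $P^T(\{0\}) = 0$. Because $T \leq 0$ and $0 \in \sigma(T)$, one still has $P^T((-\delta,0]) \neq 0$ for every $\delta > 0$, while by continuity of spectral measures $P^T((-\delta,0]) \to P^T(\{0\}) = 0$ strongly as $\delta \downarrow 0$. Since the monotone family $\{P^T((-\delta,0])\}_{\delta > 0}$ cannot be constant on any interval $(0,\delta_0)$, I can inductively select $\epsilon_1 > \epsilon_2 > \cdots \downarrow 0$ so that, writing $t_k := 1/\epsilon_k$ and $I_k := (-\epsilon_k, -\epsilon_{k+1}]$,
\[
P^T(I_k) \neq 0 \quad\text{and}\quad \alpha(t_{k+1}) > \alpha(t_k);
\]
the growth constraint is achievable at every step because $\alpha(t) \to \infty$ leaves $\epsilon_{k+1}$ free to be taken as small as needed.

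Choose unit vectors $\phi_k \in \mathrm{rng}\,P^T(I_k)$ (automatically pairwise orthogonal) and set
\[
c_k^2 \;:=\; \frac{1}{\alpha(t_k)} - \frac{1}{\alpha(t_{k+1})} \;>\; 0, \qquad \eta \;:=\; \sum_{k=1}^{\infty} c_k \phi_k.
\]
Then $\|\eta\|^2 = \sum_k c_k^2 = 1/\alpha(t_1) < \infty$, so $\eta \in \mathcal H$, and since the spectral measures $\mu_{\phi_k}^T$ have pairwise disjoint supports one has $\mu_\eta^T = \sum_k c_k^2 \, \mu_{\phi_k}^T$. Because $I_l \subset B(0;\epsilon_k) = B(0; 1/t_k)$ for every $l \geq k$, it follows that
\[
\mu_\eta^T\bigl(B(0;1/t_k)\bigr) \;\geq\; \sum_{l \geq k} c_l^2 \;=\; \frac{1}{\alpha(t_k)},
\]
which is the desired bound.

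The only delicate point is the inductive selection of the $\epsilon_k$: one must simultaneously guarantee strict monotonicity of the scales, nontriviality of each spectral increment $P^T(I_k)$, and strict increase of $\alpha(t_k)$. All three constraints are jointly feasible precisely because $P^T((-\delta,0])$ is nonzero for every $\delta > 0$ yet tends strongly to $0$ as $\delta \downarrow 0$, and $\alpha(t) \to \infty$ as $t \to \infty$; everything else in the argument is a routine assembly of the series and the disjoint-support computation of $\mu_\eta^T$.
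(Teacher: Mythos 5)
The paper does not actually prove this statement: it imports it verbatim as Lemma~2.1 of \cite{Aloisioscalesemigroup}, so there is no internal proof to compare against, and your argument must stand on its own --- which it does. Your construction is correct and is essentially the standard one behind that lemma: dispose of the case $P^T(\{0\})\neq 0$; otherwise use $0\in\sigma(T)$, $\sigma(T)\subset(-\infty,0]$ and $P^T((-\delta,0])\to P^T(\{0\})=0$ strongly to extract scales $\epsilon_k\downarrow 0$ with nonzero spectral increments $P^T\big((-\epsilon_k,-\epsilon_{k+1}]\big)$, then sum orthogonal unit vectors with telescoping weights $c_k^2=1/\alpha(t_k)-1/\alpha(t_{k+1})$. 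Two points deserve to be made explicit. First, the inductive feasibility: if $P^T\big((-\epsilon_k,-\epsilon']\big)=0$ for every $\epsilon'<\epsilon_k$, then letting $\epsilon'\downarrow 0$ and using $P^T(\{0\})=0$ gives $P^T\big((-\epsilon_k,0]\big)=0$, contradicting $0\in\sigma(T)$; moreover, once a valid $\epsilon_{k+1}$ exists, any smaller value also works, so the constraints $\epsilon_{k+1}\le\epsilon_k/2$ and $\alpha(t_{k+1})>\alpha(t_k)$ can be imposed simultaneously. Second, the telescoping identities $\|\eta\|^2=1/\alpha(t_1)$ and $\sum_{l\geq k}c_l^2=1/\alpha(t_k)$ require $\alpha(t_N)\to\infty$ along your sequence; strict monotonicity of $\alpha(t_k)$ alone would not give this, but it is automatic here because $t_N\to\infty$ and the hypothesis is a full limit $\lim_{t\to\infty}\alpha(t)=\infty$. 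With these spelled out, the proof is complete and in fact yields the inequality for every $j$, not merely for $j$ sufficiently large.
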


\begin{proof}[{Proof} {\rm (Theorem~\ref{maintheorem3})}] Let $x \in \sigma(T)$ and set $L_x=(-\infty,x]\cap \sigma(T)$,  $T_x=T P^{T}(L_x)$  and $T^{0}_x=T_x-xI$. So, by Lemma  \ref{normallema}, there exist  $\psi \in \mathcal{H}$ and $\varepsilon_j  \rightarrow 0$  such that, for sufficiently large $j$, 
\begin{eqnarray*}
& & \mu^{T^{0}_x}_{\psi}(B(0;\varepsilon_j)) \geq \frac{1}{-\ln(\varepsilon_j)}\; \Rightarrow \; \mu^{T_x}_{\psi}(B(x;\varepsilon_j))\geq \frac{1}{-\ln(\varepsilon_j)}
\\ &\quad&   \Rightarrow\;  \mu^{T}_{\psi}(B(x;\varepsilon_j)) \geq \mu^{T}_{\psi}(B(x;\varepsilon_j) \cap L_x) =  \mu^{T_x}_{\psi}(B(x;\varepsilon_j)) \geq \frac{1}{-\ln(\varepsilon_j)}.
\end{eqnarray*}  
Hence, $\mu^{T}_{\psi}$ is not U$\alpha$H for all $0<\alpha\leq 1$. Thus, by Theorem \ref{Strichartztheorem}, for every $\epsilon>0$,
\[\limsup_{t \to \infty} t^\epsilon \,\frac{1}{t} \int_0^t \bigg|\int_{\mathbb{R}} e^{- isx}\,  {\mathrm d}\mu^{T}_{\psi}(x) \bigg|^2  {\mathrm d}s = \infty. \] 
Since, by Spectral Theorem, for every $s \in \mathbb{R}$ 
\[\langle e^{-isT}\psi, \psi \rangle = \int_{\mathbb{R}} e^{- isx}\,  {\mathrm d}\mu^{T}_{\psi}(x),\]
the result follows. 
\end{proof}


\section{Generic spectral properties and proof of Theorem \ref{maintheorem}}\label{localsec}

\noindent As mentioned in the Introduction, now  we compute (Baire) generically the local dimensions of systems with purely continuous spectrum (Theorem \ref{localtheorem}) in order to prove  Theorem~\ref{maintheorem}.

Note that, for each $x \in \mathbb{R}$ and each $\epsilon > 0$, 
\[ \int_{\mathbb{R}} e^{-2t|x-y|}{\mathrm d}\mu(y)  \geq \int_{B(x;{1/t})} e^{-2t|x-y|}{\mathrm d}\mu(y)  \geq e^{-2} \mu(B(x;{1/t})).\] 
On the other hand, for  each $0 < \delta < 1$ and each  $t>0$,
\begin{eqnarray}\label{exploc3}
\nonumber \!\!\! \int_{\mathbb{R}} e^{-2t|x-y|}{\mathrm d}\mu(y) &=& \int_{B(x;\frac{1}{t^{1-\delta}})}  e^{-2t|x-y|}{\mathrm d}\mu(y) +   \int_{B(x;\frac{1}{t^{1-\delta}})^c}  e^{-2t|x-y|}{\mathrm d}\mu(y)\\
&\leq&  \mu\big(B\big(x,{1/t^{1-\delta}})) +   e^{-t^\delta} \mu(\mathbb{R}).
\end{eqnarray}

Thus, at least when $\mu$ has a certain local regularity (with respect to the Lebesgue measure), we expect that $\int_{\mathbb{R}} e^{-2t|x-y|}{\mathrm d}\mu(y)$ and $\mu(B(x;{1/t}))$ are asymptotically comparable as $t\to\infty$. 
In this sense, the following identities are expected:
\begin{equation}\label{exploc1}
\liminf_{t \to \infty} \frac{\ln [ \int_{\mathbb{R}} e^{-2t|w-y|}{\mathrm d}\mu(y)]}{\ln t} = -d_\mu^+(w),
\end{equation}
\begin{equation}\label{exploc2}
\limsup_{t \to \infty} \frac{\ln [ \int_{\mathbb{R}} e^{-2t|w-y|}{\mathrm d}\mu(y)]}{\ln  t} = -d_\mu^-(w).
\end{equation}
Indeed, these identities were proven in \cite{ACdeOscalesemigroup} (note that  since it is not possible to compare directly the  two terms on the right-hand side of~\eqref{exploc3}, some caution should be exercised when checking~(\ref{exploc1}) and~(\ref{exploc2})). We use such identities in the proof of Theorem~\ref{localtheorem} below.

\begin{theorem}\label{localtheorem}
Let~$T$ be a bounded self-adjoint operator with purely continuous spectrum. Then, there exists a generic set $\mathcal{M} \subset \mathcal{H}$  such that for each $\psi \in \mathcal{M}$, the set 
\begin{equation*}
{\mathcal{J}}_{\psi}:= \big\{x \in \sigma(T)  \mid d^-_{\mu_\psi^T}(x) =0 \, \, \, {\rm and }\, \, \,  d^+_{\mu_\psi^T}(x) = \infty \big\}
\end{equation*}
is generic in $\sigma(T)$.
\end{theorem}

\begin{remark} { \rm Theorem \ref{localtheorem} is particularly interesting when~$T$ has purely absolutely continuous spectrum, since it shows the striking difference between the typical behaviour of $d^\pm_{\mu_\psi^T}$ from the topological and measure points of view; namely, if $\mu_\psi^T$ is purely absolutely continuous, then it is well known that  $\mu_\psi^T$-$\essinf d_{\mu_\psi^T}^- =1$ (see \cite{Falconer} for details).} 
\end{remark}

\begin{proof}[{Proof} {\rm (Theorem~\ref{localtheorem})}] Note that is enough to show that, for each $x\in\sigma(T)$, the set 
\begin{equation}\label{mainprof1}
{\mathcal{G}}(x) := \big\{\psi\in \mathcal H \mid d^-_{\mu_\psi^T}(x)=0 \ \ \text{and} \ \ d^{+}_{\mu_\psi^T}(x)=\infty\big\}
\end{equation}
is generic in $\mathcal{H}$. Namely, given $0\ne\psi \in \mathcal{H}$, since, by dominated convergence, for every $t>0$ the mapping 
\[\sigma(T) \ni x \mapsto \int_{\mathbb{R}} e^{-2t|x-y|}{\mathrm d}\mu_\psi^T(y)\]
is continuous, it follows that 
\[
{\mathcal{J}}_{\psi}\,=\; A_-\cap A_+
\] is a $G_\delta$ set in $\sigma(T)$, given that 
\[
A_-=\bigcap_{l\geq 1}  \bigcap_{n\geq 1} \bigcap_{k \geq 1} \bigcup_{t\geq k} \Big\{x \in \sigma(T) \mid  t^l \int_{\mathbb{R}} e^{-2t|x-y|}{\mathrm d}\mu_\psi^T(y) < \frac{1}{n} \Big\}
\] and
\[
A_+ = \bigcap_{l\geq 1}   \bigcap_{n\geq 1} \bigcap_{k \geq 1} \bigcup_{t\geq k} \Big\{x \in \sigma(T) \mid  t^{{1}/{l}} \int_{\mathbb{R}} e^{-2t|x-y|}{\mathrm d}\mu_\psi^T(y) > n \Big\}.
\]

 Now, let $(x_n)_{n \in \mathbb{N}} \subset \sigma(T)$ be a  dense sequence in $\sigma(T)$. So, if
 
 \[\psi \in \mathcal{M} = \bigcap_n {\mathcal G}(x_n)= \big \{\varphi \mid d^-_{\mu_\varphi^T}(x_n)=0 {\rm \, \, and \, \,}  d^{+}_{\mu_\varphi^T}(x_n)=\infty, {\rm \, \, for \, \, each \, \,} n \in \mathbb{N} \big\},\]
it follows that ${\mathcal{J}}_{\psi}$ is generic in $\sigma(T)$.

After such preliminaries,    we divide the proof of Theorem~\ref{localtheorem} in 4 steps.

\

\noindent {\bf Step 1.} Let us show that for each $\rho > 0$ and each $x \in \sigma(T)$,  
\begin{equation*}
    \big\{\psi\in \mathcal{H} \mid d^{+}_{\mu^{T}_{\psi}}(x)\geq d^{-}_{\mu^{T}_{\psi}}(x)\geq \rho \big\}
\end{equation*}
is dense in $\mathcal{H}$. Namely, let for each $n \in\mathbb{N}$ and each $y\in\mathbb{R}$, 
\begin{equation*}
f_{n,\rho}(x,y):= \left(1-e^{-n|x-y|^{\rho}}\right)^{1/2}, 
\end{equation*}
and for each $\psi\neq 0$, let $\psi_{n}:=f_{n,\rho}(x,T)\psi$, where $f_{n,\rho}(x,T) := P^T(f_{n,\rho}(x,\cdot))$. Since $\mu^{T}_{\psi}$ is purely continuous, one gets, by the Spectral Theorem and dominated convergence, that  
\begin{eqnarray*}
\nonumber\|\psi_n-\psi\|^{2} &=& \|f_{n,\rho}(x,T)\psi-\psi\|^{2}\\
\nonumber&=& \|(f_{n,\rho}(x,T)-1)\psi\|^{2}\\
&=& \int_{\mathbb{R}}\left|\left(1-e^{-n|x-y|^{\rho}}\right)^{1/2}-1\right|^{2}{\mathrm d}\mu^{T}_{\psi}(y)\label{eq1}\\
\nonumber&=& \mu^{T}_{\psi}(\{x\}) + \int_{\mathbb{R}\setminus \{x\}}\left|\left(1-e^{-n|x-y|^{\rho}}\right)^{1/2}-1\right|^{2}{\mathrm d}\mu^{T}_{\psi}(y)\\
&=& \int_{\mathbb{R}\setminus \{x\}}\left|\left(1-e^{-n|x-y|^{\rho}}\right)^{1/2}-1\right|^{2}{\mathrm d}\mu^{T}_{\psi}(y)\label{eq2} \longrightarrow 0
\end{eqnarray*}
as $n \to \infty$, that is, $\psi_n \rightarrow \psi $ in $\mathcal{H}$. 

Now, by Fubini's Theorem,
\begin{eqnarray*}
\int_{\mathbb{R}}e^{-2t|x-y|}{\mathrm d}\mu^{T}_{\psi_n}(y)&=& \int_{\mathbb{R}}e^{-2t|x-y|}{\mathrm d}\mu^{T}_{f_{n,\rho}(x,T)\psi}(y)\\
&=& \int_{\mathbb{R}}e^{-2t|x-y|} |f_{n,\rho}(x,y)|^2{\mathrm d}\mu^{T}_{\psi}(y)\\
&=& \int_{\mathbb{R}}e^{-2t|x-y|} (1-e^{-n|x-y|^{\rho}}){\mathrm d}\mu^{T}_{\psi}(y)\\
&=& \int_{\mathbb{R}}e^{-2t|x-y|}|x-y|^{\rho} \frac{(1-e^{-n|x-y|^{\rho}})}{|x-y|^{\rho}}{\mathrm d}\mu^{T}_{\psi}(y)\\
&\leq & \frac{\rho}{2^\rho t^{\rho}} \int_{\mathbb{R}} \frac{(1-e^{-n|x-y|^{\rho}})}{|x-y|^{\rho}}{\mathrm d}\mu^{T}_{\psi}(y)\\
&=& \frac{\rho}{2^\rho  t^{\rho}} \int_{\mathbb{R}} \int_{0}^{n} e^{-s|x-y|^{\rho}} {\mathrm d}s\,{\mathrm d}\mu^{T}_{\psi}(y)\\
&\leq& \frac{n\rho}{2^\rho  t^{\rho}} \|\psi\|^{2}.
\end{eqnarray*}
Thus, it follows from identity (\ref{exploc2}) that for each $n \geq 1$, $d^{-}_{\mu^{T}_{\psi_n}}(x)\geq \rho$, and so 
\begin{equation*}
    \big\{\psi\in \mathcal{H} \mid d^{+}_{\mu^{T}_{\psi}}(x)\geq d^{-}_{\mu^{T}_{\psi}}(x)\geq \rho \big\}
\end{equation*}
is dense in $\mathcal{H}$. 

\ 

\noindent {\bf Step 2.}  Let us show that  for every $x\in \sigma (T)$,  there exists $\eta \in \mathcal{H}$  such that                       $d^{-}_{\mu^T_\eta}(x)=0$. 
Set $L_x=(-\infty,x]\cap \sigma(T)$,  $T_x=T P^{T}(L_x)$  and $T^{0}_x=T_x-xI$. So, by Lemma  \ref{normallema}, there exist  $\eta \in \mathcal{H}$ and $\varepsilon_j  \rightarrow 0$  such that for sufficiently large $j$, 

\begin{equation*}
\mu^{T^{0}_x}_{\eta}(B(0;\varepsilon_j))\geq \frac{1}{-\ln(\varepsilon_j)}\; \Rightarrow\; \mu^{T_x}_{\eta}(B(x;\varepsilon_j))\geq \frac{1}{-\ln(\varepsilon_j)}
\end{equation*}  

\begin{equation*}
\Rightarrow\; \ln\left(\mu^{T}_{\eta}(B(x;\varepsilon_j))\right) \geq  \ln\left(\mu^{T}_{\eta}(B(x;\varepsilon_j) \cap L_x)\right) = \ln\left(\mu^{T_x}_{\eta}(B(x;\varepsilon_j))\right)\geq \ln\left(\frac{1}{-\ln(\varepsilon_j)}\right)
\end{equation*} 

\begin{equation*}\label{eq3}
\Rightarrow \; \frac{\ln\left(\mu^{T}_{\eta}(B(x;\varepsilon_j))\right)}{\ln \varepsilon_j}\leq \frac{\ln\left(\frac{1}{-\ln(\varepsilon_j)}\right)}{\ln \varepsilon_j} \Rightarrow d^{-}_{\mu^{T}_{\eta}}(x) =  0.
\end{equation*} 
 
\ 

\noindent {\bf Step 3.} Let us show that for every $x \in \sigma(T)$, 
\begin{equation*}
    \big\{\psi\in \mathcal{H} \mid d^{-}_{\mu^{T}_{\psi}}(x)=0 \big\}
\end{equation*}
is dense in $\mathcal{H}$. Namely,  let  $x\in \sigma (T)$  and  set, for every $n \geq 1$, 
\begin{equation*}
S_n :=\left(-\infty,x-\frac{1}{n}\right)\cup \{x\}\cup \left(x+\frac{1}{n}, \infty\right).
\end{equation*}
Set also, for each $\psi \in \mathcal{H}$ and each $n \geq 1$,
\begin{equation*}
\psi_n:=P^{T}(S_n)\psi + \frac{1}{n}\eta,
\end{equation*}
where $\eta$  is given by {\bf Step 2}. One has that $\psi_n\rightarrow \psi$ in  $\mathcal{H}$, since $P^{T}(S_n)\rightarrow\mathbf{1}$ in the strong sense. Moreover, for each $n \geq 1$ and  each $0<\varepsilon<\frac{1}{n}$,  one has 
\begin{eqnarray*}
\mu^{T}_{\psi_n}(B(x;\varepsilon))&=& \langle P^{T}(B(x;\varepsilon))\psi_n, \psi_n \rangle\\
&=& \langle P^{T}(B(x;\varepsilon))P^{T}(S_n)\psi, \psi_n \rangle+\frac{1}{n}\langle P^{T}(B(x;\varepsilon))\eta, \psi_n \rangle\\
&=& \langle P^{T}(B(x;\varepsilon)\cap S_n)\psi, \psi_n \rangle+\frac{1}{n}\langle P^{T}(B(x;\varepsilon))\eta, \psi_n \rangle\\
&=& \langle P^{T}(\{x\})\psi, \psi_n \rangle+\frac{1}{n}\langle P^{T}(B(x;\varepsilon))\eta,  P^{T}(S_n)\psi \rangle+\frac{1}{n^2}\langle P^{T}(B(x;\varepsilon))\eta,   \eta\rangle\\
&=& \langle P^{T}(\{x\})\psi, \psi_n \rangle+\frac{1}{n}\langle P^{T}(\{x\})\eta,  \psi \rangle+\frac{1}{n^2}\langle P^{T}(B(x;\varepsilon))\eta,   \eta\rangle\\
&=& \frac{1}{n^2}\langle P^{T}(B(x;\varepsilon))\eta, \eta\rangle\\
&=& \frac{1}{n^2}\,\mu^{T}_{\eta}(B(x;\varepsilon))\,,
\end{eqnarray*}
and so
\begin{equation*}
d^{-}_{\psi^{T}_{\psi_n}}(x)=\liminf_{\varepsilon\downarrow 0}\frac{\ln(\mu^{T}_{\psi_n}(B(x;\varepsilon)))}{\ln \varepsilon}=\liminf_{\varepsilon\downarrow 0}\frac{\ln(\mu^{T}_{\eta}(B(x;\varepsilon))}{\ln \varepsilon}=d^{-}_{\psi^{T}_{\eta}}(x)=0.
\end{equation*}
Hence,
\begin{equation*}
  \big  \{\psi\in \mathcal{H} \mid d^{-}_{\mu^{T}_{\psi}}(x)=0 \big\}
\end{equation*}
is dense in $\mathcal{H}$.

\ 

\noindent {\bf Step 4.} Finally, in this step, we finish the proof of the theorem. Since, for each $x \in \mathbb{R}$ and each $t>0$, the mapping 
\[\
\mathcal{H} \ni \psi \mapsto \int_{\mathbb{R}} e^{-2t|x-y|}{\mathrm d}\mu_\psi^T(y) = \langle g_t(T,x)\psi, \psi \rangle\,,
\]
with $g_t(y,x) =  e^{-2t|x-y|}$, is continuous, it follows that for every $x \in \mathbb{R}$, each one of the sets
\[
B_-(x):=\big\{\psi  \in \mathcal{H}  \mid  d^+_{\mu_\psi^T}(x) = \infty\big\}=\bigcap_{l\geq 1}  \bigcap_{n\geq 1} \bigcap_{k \geq 1} \bigcup_{t\geq k} \Big\{\psi  \in \mathcal{H} \mid  t^l \int_{\mathbb{R}} e^{-2t|x-y|}{\mathrm d}\mu_\psi^T(y) < {1}/{n} \Big\}
\]
and
\[B_+(x):=\big\{\psi\in\mathcal{H}\mid d^-_{\mu_\psi^T}(x) =0\big\} = \bigcap_{l\geq 1}   \bigcap_{n\geq 1} \bigcap_{k \geq 1} \bigcup_{t\geq k}\Big \{\psi  \in \mathcal{H} \mid  t^{\frac{1}{l}} \int_{\mathbb{R}} e^{-2t|x-y|}{\mathrm d}\mu_\psi^T(y) > n \Big \}
\] is a $G_\delta$ set in $\mathcal{H}$. 
Thus, it follows from Steps~1.\ and~3.\ that for each $x \in \sigma(T)$, both $B_-(x)$ and $B_+(x)$ are generic sets in $\mathcal{H}$, and so
\begin{equation}\label{eqlocal1}
{\mathcal{G}}(x) = \bigcap_{n \geq 1}\big\{\psi\in\mathcal{H} \mid d^-_{\mu_\psi^T}(x)=0 \ \ \text{and} \ \ d^{+}_{\mu_\psi^T}(x) \geq n \big\}
\end{equation} 
is also generic in $\mathcal{H}$.
\end{proof}

\subsection{Proof Theorem  \ref{maintheorem}}

 \noindent We will also need the following result.

\

\noindent\textbf{Claim}. If $x\in\sigma(T)$, then ${\mathcal{G}}(x)  \subset \big\{\psi\in\mathcal{H} \mid \mu_\psi^T \, \, {\rm is \, not} \,  {\rm U}(1/k){\rm H}, \, \forall k \in \mathbb{N} \big\}$.  Indeed, it is enough to note that, given $\alpha>0$, if $\mu_\psi^T$ is ${\rm U}\alpha{\rm H}$, then for each $x\in\sigma(T)$, $d^{-}_{\mu_\psi^T}(x) \geq \alpha$. 

\

For $x\in\sigma(T)$, if $\psi \in {\mathcal{G}}(x)$,  then one has from the Claim that for each $k\ge 1$,  $\mu_\psi^T$   is  not  ${\rm U}(1/2k){\rm H}$. Thus, it follows from Theorem~\ref{Strichartztheorem} ii) that for each $k\ge 1$,   
\[\limsup_{t \to \infty}  t^{1/k}\,W_\psi^T(t) = \infty,\]
and then one has from the proof of Theorem~\ref{localtheorem} (recall (\ref{eqlocal1})) that for each $k\ge 1$, the set
\[
 \big\{\psi \in \mathcal{H} \mid \limsup_{t \to \infty}  t^{1/k}\,W_\psi^T(t) = \infty \big\} \supset {\mathcal{G}}(x) 
\] 
is generic in $\mathcal{H}$.

It remains to prove that for each $k \geq 1$, the set 
\[
A_k:=\big\{\psi \in \mathcal{H} \mid \liminf_{t \to \infty}  t^{1-1/k}\,W_\psi^T(t)  = 0 \big\}
\] 
is generic in $\mathcal{H}$. The proof that for each $k\ge 1$, $A_k$ is a $G_\delta$ subset of $\mathcal{H}$ follows closely the arguments presented in the proof of Theorem \ref{localtheorem}. On the other hand, it follows from Theorem~\ref{Strichartztheorem} i) that for each $k\ge 1$, 
\[\{\psi \in {\mathcal{H}}  \mid \mu_\psi^T \ {\rm is} \ {\rm U}1{\rm H}\} =: {\mathcal{H}}^T_{\mathrm{UH}}(1)  \subset A_k.\]
Finally, since by Theorem 5.2 in \cite{Last} (by taking $\alpha =1$), ${\mathcal{H}}^T_{\mathrm{UH}}(1)$ is dense in $\mathcal{H}$, it follows that for each $k\ge 1$, $A_k$ is a dense $G_\delta$ subset of $\mathcal{H}$ (recall that $T$ has purely absolutely continuous, by hypothesis).

\begin{center} \Large{Acknowledgments} 
\end{center}
\addcontentsline{toc}{section}{Acknowledgments}

 GS thanks the partial support by CAPES (Brazilian agency). SLC thanks the partial support by Fapemig (Minas Gerais state agency; Universal Project under contract 001/17/CEX-APQ-00352-17). CRdO thanks the partial support by CNPq (a Brazilian government agency, under contract 303689/2021-8). 


\

\noindent  Email: moacir.aloisio@ufvjm.edu.br, Departamento de Matem\'atica e Estatística, UFVJM, Diamantina, MG, 39100-000 Brazil

\noindent  Email: silas@mat.ufmg.br, Departamento de Matem\'atica, UFMG, Belo Horizonte, MG, 30161-970 Brazil

\noindent  Email: oliveira@dm.ufscar.br,  Departamento  de  Matem\'atica,   UFSCar, S\~ao Carlos, SP, 13560-970 Brazil

\noindent  Email: gesoares2017@gmail.com, Departamento de Matem\'atica, UFMG, Belo Horizonte, MG, 30161-970 Brazil

\end{document}